\documentclass{amsart}
%
%
%

\usepackage{amsfonts}
\usepackage{amsmath,amscd}
\usepackage{amssymb}
\usepackage{amsthm}
\usepackage{newlfont}

 \newtheorem{thm}{Theorem}[section]
 \newtheorem{cor}[thm]{Corollary}
 \newtheorem{lem}[thm]{Lemma}
 \newtheorem{prop}[thm]{Proposition}
 \theoremstyle{definition}
 
 \theoremstyle{remark}

 \numberwithin{equation}{section}

\begin{document}

\title[Non abelian tensor square of non abelian prime power groups]
 {Non abelian tensor square of non abelian prime power groups}

\author[P. Niroomand]{Peyman Niroomand}
\address{School of Mathematics and Computer Science\\
Damghan University, Damghan, Iran}
\email{niroomand@du.ac.ir, p\underline~ niroomand@yahoo.com}

\thanks{\textit{Mathematics Subject Classification 2010.} 20D15.}


\keywords{Tensor square, non abelian $p$-groups.}



\begin{abstract}
 For every
$p$-group of order $p^n$ with the derived subgroup of order $p^m$, Rocco in \cite{roc} has shown that the
order of tensor square of $G$ is at most $p^{n(n-m)}$.
In the present paper not only we improve his bound for non-abelian $p$-groups but also we describe the
structure of all non-abelian $p$-groups when the bound is
attained for a special case. Moreover, our results give as well an 
 upper bound for the order of  $\pi_3(SK(G, 1))$.

\end{abstract}

\maketitle
\section{Introduction and Preliminaries}

The tensor square $G\otimes G$ of a group $G$ is a group generated
by the symbols $g\otimes h$ subject to the relations
\[gg^{'}\otimes h=(^gg^{'}\otimes ^gh)(g\otimes h) ~\text{and}~g\otimes hh^{'}=(g\otimes h)\ (^hg\otimes h^{'}) \]
for all $g,g',h,h'\in G,$ where $^gg'=gg'g^{-1}$. The non abelian
tensor square is a special case of non abelian tensor product, which
was introduced by R. Brown and J.-L. Loday in \cite{br2}.

There  exists a homomorphism of groups $\kappa :G\otimes
G\rightarrow G^{'}$  sending $g\otimes h$ to $[g,h]=ghg^{-1}h^{-1}$.
The kernel of $\kappa$ is denoted by $J_2(G)$; its topological
interest is in the formula $\pi_3SK(G, 1) = J_2(G)$ (see
\cite{br2}).

According to the formula $\pi_3SK(G, 1) = J_2(G)$  computing the
order of $G\otimes G$ has interests in topology in addition to its
interpretation as a problem in the group theory.

Rocco in \cite{roc} and later Ellis in \cite{el} have shown that the
order of tensor square of $G$ is at most $p^{n(n-m)}$ for every
$p$-group of order $p^n$ with the derived subgroup of order $p^m$.

The purpose of this paper is a further investigation on the order of
tensor square of non abelian $p$-groups. We focus on non abelian
$p$-groups because in abelian case the non abelian tensor square
coincides with the usual abelian tensor square of abelian groups.
To be precise, for a non abelian $p$-group of order $p^n$ and  the derived subgroup of order $p^m$, 
we prove that $|G\otimes G|\leq p^{(n-1)(n-m)+2}$ and also we obtain the explicit structure of $G$  when $|G\otimes G|=p^{(n-1)^2+2}$.  It easily seen that the bound is less than of Rocco's bound, unless that
 $G\cong Q_8$ or $G\cong E_1$, which causes two
bounds to be equal. As a corollary by  using the fact $\pi_3SK(G, 1)\cong \mathrm{Ker}(G\otimes G\stackrel{\kappa}\rightarrow G^{'})$, we can see that $ |\pi_3SK(G, 1)| =| J_2(G)|\leq p^{n(n-m-1)+2}$.

Thorough the paper, $D_8$, $Q_8$ denote the dihedral and quaternion
group of order 8, $E_1$ and $E_2$ denote the extra-special p-groups
of order $p^3$ of exponent $p$ and $p^2$, respectively. Also
$C_{p^t}^{(k)}$ and $\nabla(G)$ denote the direct product of $k$
copies of the cyclic group of order $p^t$ and the subgroup generated
by $g\otimes g $ for all $g$ in $G$, respectively.

\section{Main Results}
The aim of this section is finding an upper bound for the order
tensor square of non abelian $p$-groups of order $p^n$ in terms of
the order of $G^{'}$. Also in the case for which $|G^{'}|=p$, the
structure of groups is obtained when $|G\otimes G|$ reaches the
upper bound. 

\begin{prop}\label{es}\cite[Proposition 9]{br}.
Given a central extension
\[1\rightarrow Z\longrightarrow H\longrightarrow G\longrightarrow1\]
there is an exact sequence
\[(Z\otimes H)\times(H\otimes Z)\stackrel{l}\longrightarrow H\otimes H\longrightarrow G\otimes G\longrightarrow 1\]
in which $\mathrm {Im}~l$ is central.
\end{prop}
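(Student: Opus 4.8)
The plan is to exploit the functoriality of the non-abelian tensor square together with the crossed-module structure carried by $\kappa$. First I would observe that the quotient map $q\colon H\lo G$ induces a homomorphism $q\otimes q\colon H\otimes H\lo G\otimes G$, $h\otimes h'\mapsto q(h)\otimes q(h')$, which is well defined because $q$ carries each defining relation of $H\otimes H$ to a defining relation of $G\otimes G$; since $q$ is onto, $q\otimes q$ is onto, and this yields exactness at $G\otimes G$. The map $l$ is the one induced by the inclusion $Z\hookrightarrow H$ on each factor, so $\mathrm{Im}\,l$ is precisely the subgroup $N=\langle z\otimes h,\ h\otimes z : z\in Z,\ h\in H\rangle$ of $H\otimes H$, and the whole problem reduces to proving that $\ker(q\otimes q)=N$ and that $N$ is central.

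For centrality I would use the fact that $\kappa\colon H\otimes H\lo H'$, $h\otimes h'\mapsto[h,h']$, makes $H\otimes H$ a crossed module over $H$ (see \cite{br2}); in particular the Peiffer identity gives ${}^{\kappa(w)}w'=ww'w^{-1}$ for all $w,w'\in H\otimes H$, whence $[w,w']=({}^{\kappa(w)}w')\,w'^{-1}$. Taking $w=z\otimes h$ with $z\in Z$ one has $\kappa(w)=[z,h]=1$, so ${}^{\kappa(w)}w'=w'$ and therefore $w$ commutes with every $w'$; the same computation applied to $h\otimes z$ (using $[h,z]=1$) shows that every generator of $N$ is central. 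Hence $N$ is a central, and in particular normal, subgroup of $H\otimes H$, which simultaneously settles the final assertion that $\mathrm{Im}\,l$ is central.

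It remains to identify the kernel. The inclusion $N\subseteq\ker(q\otimes q)$ is immediate, since $q(z)=1$ forces $z\otimes h$ and $h\otimes z$ to map to the trivial elements $1\otimes q(h)$ and $q(h)\otimes 1$ of $G\otimes G$. For the reverse inclusion I would pass to the quotient $(H\otimes H)/N$ and show it enjoys the universal property of $G\otimes G$. The key point is that, $Z$ being central, ${}^z h=h$ for $z\in Z$, so the defining relation $zh\otimes h'=({}^z h\otimes{}^z h')(z\otimes h')=(h\otimes h')(z\otimes h')$ gives $zh\otimes h'\equiv h\otimes h'\pmod N$, and symmetrically in the second coordinate. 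Thus the image of $h\otimes h'$ in $(H\otimes H)/N$ depends only on the cosets $q(h),q(h')$, and the defining relations of $G\otimes G$ are satisfied; this produces a homomorphism $G\otimes G\lo (H\otimes H)/N$ inverse to the map induced by $q\otimes q$, forcing $\ker(q\otimes q)=N=\mathrm{Im}\,l$ and giving exactness at $H\otimes H$.

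The main obstacle is this last step: one must verify that $q(h)\otimes q(h')\mapsto \overline{h\otimes h'}$ is genuinely well defined on all of $G\otimes G$, that is, that every defining relation of $G\otimes G$ lifts consistently modulo $N$, which is exactly where the centrality of $Z$ (through ${}^z h=h$) and the centrality of $N$ are both invoked. Once this is in place, assembling the three steps delivers the exact sequence $(Z\otimes H)\times(H\otimes Z)\st{l}\lo H\otimes H\lo G\otimes G\lo 1$ with $\mathrm{Im}\,l$ central, as claimed.
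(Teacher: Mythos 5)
The paper offers no proof of this proposition at all: it is quoted verbatim from \cite[Proposition 9]{br} (Brown--Johnson--Robertson), so there is no internal argument to compare yours against. Your blind proof is correct and is essentially the standard derivation one finds in that source: surjectivity of $q\otimes q$ by functoriality; centrality of $\mathrm{Im}\,l$ via the Peiffer identity ${}^{\kappa(w)}w'=ww'w^{-1}$ for the crossed module $\kappa\colon H\otimes H\to H$, applied to $w=z\otimes h$ and $w=h\otimes z$ where $\kappa(w)=[z,h]=1$; and the identification $\ker(q\otimes q)=N$ by exhibiting $(H\otimes H)/N$ as satisfying the universal presentation of $G\otimes G$, using $zh\otimes h'=({}^zh\otimes{}^zh')(z\otimes h')=(h\otimes h')(z\otimes h')$ and its mirror relation to see that cosets of $h\otimes h'$ modulo $N$ depend only on $q(h)$ and $q(h')$. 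You correctly flag the one point that needs genuine care, namely the well-definedness of the inverse map $G\otimes G\to (H\otimes H)/N$ on all defining relations, and your use of the centrality of $Z$ there is exactly what makes it work; the only cosmetic remark is that normality of $N$ (which you get for free from centrality) is all that is needed to form the quotient. I find no gap.
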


\begin{prop}\cite[Proposition 13, 14]{br}\label{3}
The tensor square of $D_8$ and  $Q_8$ is isomorphic to
\[C_2^{(3)}\times C_4~\text{and}~C_2^{(2)}\times C_4^{(2)},\] respectively.

\end{prop}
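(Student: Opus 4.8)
The plan is to reduce the computation to that of a finitely generated abelian group and then read off its invariant factors. First I would observe that for both $G=D_8$ and $G=Q_8$ the tensor square is \emph{abelian}. Indeed, these groups have nilpotency class $2$ with $G^{'}=Z(G)$, and in $G\otimes G$ conjugation by a generator $g\otimes h$ coincides with the action of the element $\kappa(g\otimes h)=[g,h]\in G^{'}$ on the tensor square. Since $G^{'}\le Z(G)$, that action is trivial, so
\[(g\otimes h)(x\otimes y)(g\otimes h)^{-1}={}^{[g,h]}x\otimes{}^{[g,h]}y=x\otimes y.\]
Hence every generator $x\otimes y$ is central, and $G\otimes G$ is abelian. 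This is the key simplification: it turns the problem into identifying a finite abelian $2$-group.

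Next I would fix the presentations $D_8=\langle a,b\mid a^4=b^2=1,\ bab^{-1}=a^{-1}\rangle$ and $Q_8=\langle a,b\mid a^4=1,\ b^2=a^2,\ bab^{-1}=a^{-1}\rangle$. Using the defining relations $gg^{'}\otimes h=({}^gg^{'}\otimes{}^gh)(g\otimes h)$ and $g\otimes hh^{'}=(g\otimes h)({}^hg\otimes h^{'})$, every $g\otimes h$ can be rewritten through the four elements $a\otimes a$, $a\otimes b$, $b\otimes a$, $b\otimes b$. I would then expand the consequences of $a^4=1$, of $b^2=1$ (respectively $b^2=a^2$), and of $bab^{-1}=a^{-1}$ to extract the full set of relations among these four generators, thereby fixing their orders and the relations tying the mixed generators $a\otimes b,\ b\otimes a$ to the diagonal generators $a\otimes a,\ b\otimes b$. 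Putting the resulting relation matrix in Smith normal form then yields $C_2^{(3)}\times C_4$ for $D_8$ and $C_2^{(2)}\times C_4^{(2)}$ for $Q_8$. The single point at which the two computations diverge is the relation governing $b\otimes b$: the identity $b^2=1$ in $D_8$ forces an extra relation that is absent when $b^2=a^2$ in $Q_8$, and this is exactly what promotes a $C_2$ factor to a $C_4$ and raises the order from $2^5$ to $2^6$.

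The main obstacle is the bookkeeping in the second step, namely verifying that no relation among the diagonal generators $a\otimes a,\ b\otimes b$ (which together span $\nabla(G)$) is overlooked and that their orders are computed correctly, since these generators carry the entire difference between the two groups. To guard against error I would cross-check the orders against the structural exact sequences available for any group,
\[0\to M(G)\to G\wedge G\to G^{'}\to1\qquad\text{and}\qquad\Gamma(G/G^{'})\to G\otimes G\to G\wedge G\to1,\]
where $M(G)$ is the Schur multiplier and $\Gamma$ is Whitehead's quadratic functor. Feeding in $M(D_8)=C_2$, $M(Q_8)=1$, $G^{'}=C_2$, $G/G^{'}=C_2\times C_2$ and $\Gamma(C_2\times C_2)\cong C_4^{(2)}\times C_2$ gives $|D_8\wedge D_8|=4$ and $|Q_8\wedge Q_8|=2$, together with the bound $|G\otimes G|\le|\Gamma(G/G^{'})|\cdot|G\wedge G|$. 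This caps the orders at $2^6$ for $Q_8$ and $2^7$ for $D_8$; matching these against the invariant-factor computation forces $\nabla(Q_8)\cong\Gamma(C_2\times C_2)$, so that the map $\Gamma(G/G^{'})\to G\otimes G$ is injective for $Q_8$, while $\nabla(D_8)$ is realised as a proper quotient of $\Gamma(C_2\times C_2)$ of order $8$. These independent checks confirm that the presentation analysis has produced the correct abelian groups.
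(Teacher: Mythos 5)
The paper gives no proof of this proposition at all, merely citing Propositions 13 and 14 of Brown--Johnson--Robertson, and your strategy --- abelianness of $G\otimes G$ from $G^{'}\le Z(G)$ via the conjugation formula $(g\otimes h)(x\otimes y)(g\otimes h)^{-1}={}^{[g,h]}x\otimes{}^{[g,h]}y$, reduction to the four generators $a\otimes a$, $a\otimes b$, $b\otimes a$, $b\otimes b$, and relation-chasing from the presentations --- is essentially the computation carried out in that cited source, with your cross-check via $\mathcal{M}(D_8)\cong C_2$, $\mathcal{M}(Q_8)=1$ and $\Gamma(C_2\times C_2)\cong C_4^{(2)}\times C_2$ correctly consistent with the orders $2^5$ and $2^6$. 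The one caveat is that the decisive step (writing out the explicit relation matrix and its Smith normal form) is described rather than executed, but the method is sound and the stated invariant factors are the correct ones.
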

Recall that \cite{br,br2} the order of tensor square of G is equal
to $|\nabla(G)||\mathcal{M}(G)||G^{'}|$, where $\mathcal{M}(G)$ is
the Schur multiplier of $G$.

Put $G^{ab}=G/G^{'}$. In analogy with the above proposition the
following lemma is characterized the tensor square of extra-special
$p$-groups of order $p^3 (p\neq2).$
\begin{lem}\label{2}
The tensor square of $E_1$ and $E_2$ are isomorphic to $C_p^{(6)}$
and $ C_p^{(4)}$, respectively.

\end{lem}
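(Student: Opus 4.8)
The plan is to treat $E_1$ and $E_2$ as uniformly as possible, exploiting that both are extra-special of order $p^3$ with $p$ odd, so that $Z(E_i)=E_i^{'}\cong C_p$ and $E_i^{ab}=E_i/E_i^{'}\cong C_p\times C_p$. Since the two groups share the same abelianization, the whole discrepancy between their tensor squares must sit in the Schur multiplier term of the identity $|G\otimes G|=|\nabla(G)||\mathcal{M}(G)||G^{'}|$. Accordingly I would (a) bound the order from above through Proposition \ref{es}, (b) pin down the exact order from the multiplier formula, and (c) prove that $E_i\otimes E_i$ is \emph{elementary abelian}, so that its order alone determines the isomorphism type.

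First I would apply Proposition \ref{es} to the central extension
\[1\rightarrow E_i^{'}\longrightarrow E_i\longrightarrow E_i^{ab}\longrightarrow 1,\]
which is legitimate since $E_i^{'}=Z(E_i)$ is central. This produces the exact sequence
\[(E_i^{'}\otimes E_i)\times(E_i\otimes E_i^{'})\stackrel{l}\longrightarrow E_i\otimes E_i\longrightarrow E_i^{ab}\otimes E_i^{ab}\longrightarrow 1\]
with $\mathrm{Im}\,l$ central. Because $E_i^{ab}\cong C_p\times C_p$ is abelian, its non abelian tensor square collapses to the ordinary tensor product $E_i^{ab}\otimes_{\mathbb{Z}}E_i^{ab}\cong C_p^{(4)}$. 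Moreover, as $E_i^{'}\cong C_p$ is central, both conjugation actions in $E_i^{'}\otimes E_i$ and $E_i\otimes E_i^{'}$ are trivial, so each of these reduces to $C_p\otimes_{\mathbb{Z}}C_p^{(2)}\cong C_p^{(2)}$. Hence $E_i\otimes E_i$ is a finite $p$-group sitting in a central extension of $C_p^{(4)}$ by a subgroup of order at most $p^{4}$, and in particular $|E_i\otimes E_i|\le p^{8}$.

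To obtain the exact order I would invoke $|G\otimes G|=|\nabla(G)||\mathcal{M}(G)||G^{'}|$ together with the known Schur multipliers of the extra-special groups of order $p^3$, namely $\mathcal{M}(E_1)\cong C_p^{(2)}$ and $\mathcal{M}(E_2)\cong 1$. It then remains to evaluate $|\nabla(E_i)|$; since $\nabla(E_i)$ is central and controlled by $E_i^{ab}$, one checks $|\nabla(E_1)|=|\nabla(E_2)|=p^{3}$, giving $|E_1\otimes E_1|=p^{6}$ and $|E_2\otimes E_2|=p^{4}$. To finish, I would show that $E_i\otimes E_i$ is abelian of exponent $p$: abelianness follows from analyzing the commutator identities in the tensor square (the commutator of two generators is the image of the $G^{'}$-action, which is tame here because $G$ is nilpotent of class $2$ with central $G^{'}$), while exponent $p$ uses $p\neq2$ in an essential way. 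An elementary abelian $p$-group of order $p^{6}$, respectively $p^{4}$, is then forced to be $C_p^{(6)}$, respectively $C_p^{(4)}$.

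The main obstacle is step (c). An order count never by itself determines a $p$-group, so the genuine content is the elementary-abelian property, and this does \emph{not} follow formally from the central extension of Paragraph 2: a central extension of $C_p^{(4)}$ by an exponent-$p$ group can still acquire exponent $p^2$. Thus the power relation $(g\otimes h)^p$ must be controlled directly from the defining relations of the tensor square, and it is exactly here that the hypothesis $p\neq2$ intervenes, as Proposition \ref{3} shows the tensor squares of $D_8$ and $Q_8$ are not of exponent $2$. A secondary, linked difficulty is confirming that the upper bound from Proposition \ref{es} is actually realized, i.e.\ that the map $l$ collapses no more than the multiplier formula predicts; reconciling the two order computations is what closes the argument.
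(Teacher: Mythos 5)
Your strategy is essentially the paper's: compute $|E_i\otimes E_i|$ from the formula $|G\otimes G|=|\nabla(G)||\mathcal{M}(G)||G^{'}|$ using the known Schur multipliers and $|\nabla(E_i)|=p^3$, and then argue that the group is elementary abelian so that the order determines it. But the decisive step is exactly the one you defer. You correctly observe that the order count and the central extension from Proposition \ref{es} cannot by themselves rule out exponent $p^2$, and you then say the relation $(g\otimes h)^p=1$ ``must be controlled directly from the defining relations'' --- without doing so. That is the entire nontrivial content of the lemma for $E_1$: since $|E_1\otimes E_1|=p^6$ while $|E_1^{ab}\otimes E_1^{ab}|=p^4$, the kernel of the surjection onto $E_1^{ab}\otimes E_1^{ab}$ has order $p^2$ and nothing in your argument controls its exponent or whether the extension splits. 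The paper closes this gap by citing \cite[Theorem 2]{el}, which bounds the exponent of $G\otimes G$ by that of $G$ for such $p$-groups; since $E_1$ has exponent $p$ (here is where $p\neq 2$ enters, not in a hand computation of $(g\otimes h)^p$), elementary abelianness follows at once. As written, your proof establishes only $|E_1\otimes E_1|=p^6$, not $E_1\otimes E_1\cong C_p^{(6)}$.

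A smaller point: for $E_2$ you list ``reconciling the two order computations'' as a remaining difficulty, but your own numbers already finish that case. Since $\mathcal{M}(E_2)=1$ gives $|E_2\otimes E_2|=p^4=|E_2^{ab}\otimes E_2^{ab}|$, the canonical epimorphism $E_2\otimes E_2\rightarrow E_2^{ab}\otimes E_2^{ab}\cong C_p^{(4)}$ is an isomorphism by order comparison, with no exponent argument needed; this is precisely what the paper means by $E_2\otimes E_2\cong E_2^{ab}\otimes E_2^{ab}$. So the $E_2$ half of your argument is complete once you draw this conclusion, while the $E_1$ half has a genuine hole that requires importing Ellis's exponent theorem (or an honest computation replacing it).
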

\begin{proof} It can be proved from \cite[Theorem 2]{el} that  $E_1\otimes E_1$ is elementary
abelian. Now, by invoking \cite[Proposition 2.2 (iii)]{ru},
$\nabla(E_1)\cong \nabla(E_1^{ab})$ and hence $|\nabla(E_1)|=p^3$.
On the other hand, \cite[Theorem 3.3.6]{ka} implies that the Schur
multiplier of $E_1$ is of order $p^2$, and so  $|E_1\otimes
E_1|=p^6$.

In the case $G=E_2$ in a similar fashion, we can prove that
$E_2\otimes E_2\cong E_2^{ab}\otimes E_2^{ab}$.
\end{proof}
\begin{lem}\cite[Corollary 2.3]{ni}\label{de} The tensor square of an
extra-special $p$-group $H$ of order $p^{2m+1}$ is elementary
abelian of order $p^{4m^2}$, for $m\geq 2$.
\end{lem}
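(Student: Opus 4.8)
The plan is to follow the strategy of the proof of Lemma~\ref{2}: first establish that $H\otimes H$ is elementary abelian, and then compute its order through the factorization $|H\otimes H|=|\nabla(H)|\,|\mathcal{M}(H)|\,|H'|$ recalled just before Lemma~\ref{2}. Since $H$ is extra-special of order $p^{2m+1}$, we have $H'=Z(H)=\Phi(H)$ of order $p$, whence $H^{ab}\cong C_p^{(2m)}$ and $H$ is nilpotent of class $2$; the factor $|H'|$ is therefore a single $p$.

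First I would show the tensor square is elementary abelian. As in the proof of Lemma~\ref{2}, this should come from the criterion \cite[Theorem 2]{el} for the tensor square to be elementary abelian, applied to the class-$2$ group $H$ with $p$ odd. The only delicate point is that the statement must be read so as to cover the extra-special group of exponent $p^2$ as well as the one of exponent $p$; the reason it still applies is that $H^{ab}$ has exponent $p$ regardless of the type of $H$, and it is this quotient that controls $\exp(H\otimes H)$.

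Next I would pin down $\nabla(H)$. The isomorphism $\nabla(H)\cong\nabla(H^{ab})$ from \cite[Proposition 2.2(iii)]{ru}, already used for $E_1$ and $E_2$, reduces this to the abelian group $C_p^{(2m)}$. Because the non abelian tensor square of an abelian group is the ordinary tensor product, $\nabla(C_p^{(2m)})$ is the subgroup spanned by the diagonal tensors $a\otimes a$, which for odd $p$ is exactly the symmetric part of $C_p^{(2m)}\otimes C_p^{(2m)}$, of rank $\binom{2m+1}{2}$. Hence $|\nabla(H)|=p^{2m^2+m}$.

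The remaining ingredient, and the step I expect to be the main obstacle, is the Schur multiplier. One needs the value $|\mathcal{M}(H)|=p^{2m^2-m-1}$ for an extra-special $p$-group of order $p^{2m+1}$ with $m\geq 2$, which should be extractable from the structure theory of multipliers of $p$-groups in \cite{ka}; here the hypothesis $m\geq 2$ is precisely what excludes the exceptional rank of $\mathcal{M}(E_1)$ seen in Lemma~\ref{2}, and it lets the two isomorphism types be handled simultaneously since their multipliers coincide once $m\geq 2$. Granting this, the factorization yields
\[ |H\otimes H|=p^{2m^2+m}\cdot p^{2m^2-m-1}\cdot p=p^{4m^2}, \]
and combined with the first step this gives $H\otimes H\cong C_p^{(4m^2)}$, as claimed.
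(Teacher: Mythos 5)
The paper offers no internal proof of this lemma: it is imported verbatim as \cite[Corollary 2.3]{ni}, so there is no argument of the paper's to compare yours against. Judged on its own terms, your reconstruction has the right skeleton --- the factorization $|H\otimes H|=|\nabla(H)|\,|\mathcal{M}(H)|\,|H'|$, the value $|\mathcal{M}(H)|=p^{2m^2-m-1}$ for $m\geq 2$ from \cite{ka}, and $|\nabla(H^{ab})|=p^{\binom{2m+1}{2}}$ --- and the arithmetic closes correctly. But two of your three steps are argued only for odd $p$, whereas the lemma carries no parity restriction and is applied later in the paper (in Proposition \ref{pro}(ii), Case $(i)$, and in Theorem \ref{mt}) to $2$-groups as well.

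Concretely: (1) \cite[Proposition 2.2(iii)]{ru} yields $\nabla(H)\cong\nabla(H^{ab})$ only when $H^{ab}$ has no $2$-torsion (or $H'$ is complemented, which fails in an extra-special group), so for $p=2$ your identification $|\nabla(H)|=p^{2m^2+m}$ is unsupported --- and it genuinely fails at $m=1$, since $|\nabla(Q_8)|=2^5$ while $|\nabla(Q_8^{ab})|=2^3$. (2) Your justification of the elementary-abelian step rests on the principle that $\exp(H^{ab})$ ``controls'' $\exp(H\otimes H)$; this is false, as $D_8$ and $Q_8$ have elementary abelian abelianizations but tensor squares of exponent $4$, and indeed the paper itself avoids this route for $E_2$, proving $E_2\otimes E_2\cong E_2^{ab}\otimes E_2^{ab}$ rather than invoking \cite[Theorem 2]{el} for the exponent-$p^2$ type. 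The telling symptom is that neither of these two steps uses the hypothesis $m\geq 2$ anywhere, yet both of their conclusions are false at $m=1$ when $p=2$; so the argument cannot be complete as written. A correct proof must feed $m\geq 2$ into the structure of $H$ itself (for instance through its central-product decomposition into extra-special groups of order $p^3$), not only into the Schur multiplier count.
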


 \begin{prop}\label{pro} Let $G$ be a $p$-group of order $p^n$ and $|G^{'}|=p$. If one of the following conditions holds, then the order of tensor square is less than  $p^{(n-1)^2+2}$.
\begin{itemize}
\item[(i)] $G^{ab}$ is not elementary abelian;
\item[(ii)] $G^{ab}$ is elementary abelian and $Z(G)$ is not elementary abelian.
\end{itemize}
\end{prop}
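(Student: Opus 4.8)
The plan is to feed the central extension $1\to G'\to G\to G^{ab}\to 1$ into Proposition \ref{es}; since $|G'|=p$ forces $G'$ to be central, this yields an exact sequence $(G'\otimes G)\times(G\otimes G')\stackrel{l}{\to}G\otimes G\to G^{ab}\otimes G^{ab}\to 1$ with $\mathrm{Im}\,l$ central, so by exactness $|G\otimes G|=|G^{ab}\otimes G^{ab}|\,|\mathrm{Im}\,l|$. Writing $G^{ab}\cong\prod_{i=1}^{d}C_{p^{e_i}}$ with $\sum_i e_i=n-1$, the abelian tensor square is the ordinary one, so $|G^{ab}\otimes G^{ab}|=p^{f}$ with $f=\sum_{i,j}\min(e_i,e_j)$. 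Because $G'$ and $G$ act trivially on each other, the standard description of such tensor products gives $G'\otimes G\cong C_p\otimes_{\mathbb Z}G^{ab}$ and $G\otimes G'\cong G^{ab}\otimes_{\mathbb Z}C_p$, each of order $p^{d}$, whence $|\mathrm{Im}\,l|\le p^{2d}$. Note also that $\mathrm{Im}\,l$ is generated by the central tensors $c\otimes g$ and $g\otimes c$, where $c$ generates $G'$.

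For part (i) this crude estimate already suffices once I establish the combinatorial inequality $f+2d\le(n-1)^2$ for every non-elementary-abelian $G^{ab}$. The deficiency is $\big(\sum_i e_i\big)^2-f=\sum_{i,j}\big(e_ie_j-\min(e_i,e_j)\big)$, and keeping only the diagonal terms with $e_i\ge 2$ together with the mixed off-diagonal terms bounds it below by $2k(d-k+1)$, where $k\ge 1$ is the number of indices $i$ with $e_i\ge 2$; then $2k(d-k+1)\ge 2d$ because $k(d-k+1)-d=(k-1)(d-k)\ge 0$. Hence $|G\otimes G|\le p^{f+2d}\le p^{(n-1)^2}<p^{(n-1)^2+2}$.

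For part (ii) the abelian factor is maximal, $f=(n-1)^2$, so I must instead shrink $\mathrm{Im}\,l$ below $p^2$. Here $G^{ab}$ elementary abelian means $G^p\le G'$, while $Z(G)$ non-elementary supplies a central $z$ with $z^p\in G'\setminus\{1\}$; after adjusting the generator I may take $c=z^p$. For a central element the relations give $z\otimes g^{k}=(z\otimes g)^{k}$ and $z^{k}\otimes g=(z\otimes g)^{k}$, so for every $g$, using $g^p\in G'=\langle c\rangle$, one gets $c\otimes g=(z\otimes g)^p=z\otimes g^p\in\langle z\otimes c\rangle$ and symmetrically $g\otimes c\in\langle c\otimes z\rangle$. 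Moreover $z\otimes c=z\otimes z^p=(z\otimes z)^p=z^p\otimes z=c\otimes z=:w$, and $w^p=z\otimes z^{p^2}=1$ since $z^{p^2}=c^p=1$. Thus $\mathrm{Im}\,l\subseteq\langle w\rangle$ has order at most $p$, giving $|G\otimes G|\le p^{(n-1)^2+1}<p^{(n-1)^2+2}$.

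The routine part is Proposition \ref{es} together with the count of the abelian tensor square; the two load-bearing steps are the elementary inequality $(k-1)(d-k)\ge 0$ behind (i) and, for (ii), the collapse of all central tensors into the single cyclic group $\langle(z\otimes z)^p\rangle$ of exponent $p$. The point demanding the most care is the exactness of the estimate $|G'\otimes G|\le p^{d}$ (equivalently the vanishing of $c\otimes c$ in $G'\otimes G$): the inequality $f+2d\le(n-1)^2$ is tight precisely in the boundary case $G^{ab}\cong C_{p^2}\times C_p^{(n-3)}$, so any weakening of this factor to $p^{d+1}$ would destroy strictness there. Securing that exact bound, rather than any deep structural fact, is the main obstacle.
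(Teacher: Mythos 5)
Your proof is correct, and it diverges from the paper in an interesting way. For part (i) you take essentially the paper's route: feed the central extension $1\to G'\to G\to G^{ab}\to 1$ into Proposition \ref{es} and count orders of abelian tensor products. You keep both factors $|G'\otimes G|$ and $|G\otimes G'|$, i.e.\ the bound $|\mathrm{Im}\,l|\le p^{2d}$, and so must prove the sharper combinatorial inequality $f+2d\le (n-1)^2$, which you do correctly via $(k-1)(d-k)\ge 0$; the paper uses only a single factor $p^{d}$ and a looser numerical estimate. For part (ii) your argument is genuinely different. The paper writes $G=HZ(G)$ with $H$ extra-special, passes to an epimorphism from $(H\times Z(G))\otimes (H\times Z(G))$, invokes Lemma \ref{de} when $|H|\ge p^5$, and disposes of the residual case $Z(G)\cong C_{p^2}$ with a rather terse appeal to Proposition \ref{es}. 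You instead stay with the single extension $1\to G'\to G\to G^{ab}\to 1$ and kill $\mathrm{Im}\,l$ directly from the defining relations: since $G^{ab}$ has exponent $p$ and $Z(G)$ supplies a central $z$ with $c=z^p$ generating $G'$, the identities $z^p\otimes g=(z\otimes g)^p=z\otimes g^p$ (valid because $z$ is central) push every generator $c\otimes g$, $g\otimes c$ of $\mathrm{Im}\,l$ into the cyclic group $\langle (z\otimes z)^p\rangle$ of order at most $p$. This is uniform (no case split on $|H|$), self-contained (no extra-special classification, no Lemma \ref{de}, no unproved epimorphism), and gives the sharper bound $|G\otimes G|\le p^{(n-1)^2+1}$; its only cost is the explicit relator computation, which you carry out correctly. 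Your closing concern about the exactness of $|G'\otimes G|=p^{d}$ is legitimate but harmless: the actions are trivial, so $G'\otimes G\cong C_p\otimes_{\mathbb Z}G^{ab}$ has order exactly $p^{d}$, and indeed this exact value is needed in the boundary case $G^{ab}\cong C_{p^2}\times C_p^{(n-3)}$ where $f+2d=(n-1)^2$ with equality.
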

\begin{proof}[Proof (i)]The proof is an upstanding result of Proposition \ref{es} while $Z=G^{'}$.
 Let $G^{ab}=C_{p^{m_1}}\times C_{p^{m_2}}\times\ldots\times
C_{p^{m_k}}$ where $\sum_{i=1}^{k}m_i=n-1$ and $m_i \leq m_{i+1}$
for all $i$  $1\leq i\leq k-1$. Then
\[\begin{array}{lcl}|G\otimes G|&\leq&|G^{'}\otimes G^{ab}||G^{ab}\otimes G^{ab}|\vspace{.3cm}\\&=&|C_p\otimes
C_{p^{m_1}}\times C_{p^{m_2}}\times\ldots\times
C_{p^{m_k}}|\\&&|C_{p^{m_1}}\times C_{p^{m_2}}\times\ldots\times
C_{p^{m_k}}\otimes C_{p^{m_1}}\times C_{p^{m_2}}\times\ldots\times
C_{p^{m_k}}|\vspace{.3cm}\\&=& p^{m_k+\ldots +m_1+2(m_{k-1}+\ldots
+m_1+m_{k-2}+\ldots +m_1+\ldots +m_1)+k}\vspace{.3cm}\\&\leq&
p^{n-1+2(n-3+n-4+\ldots +n-2k+3)+k}\vspace{.3cm}\\&<&
p^{(n-1)^2+2},\end{array}\] as required.

\proof[(ii)] Since $G^{ab}$ is a vector space on $C_p$, let
$H/G^{'}$ be the complement of $Z(G)/G^{'}$ in $G^{ab}$. Moreover
$H$ is extra-special and $G=HZ(G)$. There is an epimorphism $H\times
Z(G)\otimes H\times Z(G)\longrightarrow G\otimes G$, so  \[|G\otimes
G|\leq|H\times Z(G)\otimes H\times Z(G)|.\] Let $|Z(G)|=p^k$ and
$|H|=p^{2m+1}$, we can suppose that $k\geq 2$ by using Proposition
\ref{3}. Now the following two cases can be considered.

Case $(i).$ First suppose that  $m\geq 2$.

Let $Z(G)\cong C_{p^{k_1}}\times\ldots \times C_{p^{k_t}}$ and
$\sum_{i=1}^{t}k_i=n-2m.$ Applying Lemma \ref{de} and
\cite[Proposition 11]{br}, we have
\[\begin{array}{lcl}|G\otimes G|&\leq&|H\otimes H|{|H\otimes Z(G)|}^2|Z(G)\otimes Z(G)|\vspace{.3cm}\\&=&
p^{4m^2}|C_p^{(m)}\otimes C_{p^{k_1}}\times\ldots \times
C_{p^{k_t}}|^{2}|C_{p^{k_1}}\times\ldots \times C_{p^{k_t}}\otimes
C_{p^{k_1}}\times\ldots \times
C_{p^{k_t}}|\vspace{.3cm}\\&=&p^{4m^2}p^{2mt}p^{(2t-1)k_1+(2t-3)k_2+\ldots
+k_t}\vspace{.3cm}\\&\leq&
p^{4m^2}p^{2mt}p^{n-2m+2(n-2m-2+\ldots+n-2m-t)}\vspace{.3cm}\\&<&
p^{(n-1)^2+2},
\end{array}\]as required.

Case $(ii).$ Without loss of generality,
 we can suppose that $Z(G)\cong C_{p^2}$. Now the result is obtained by using Proposition \ref{es} and the fact that $|Im l|\geq p$.
 \end{proof}

\begin{thm}\label{mt}Let $G$ be a non abelian $p$-group of order $p^n$. If $|G^{'}|=p$, then
\[|G\otimes G|\leq p^{(n-1)^2+2},\] and the equality holds if and only if $G$ is isomorphic to $H\times E$,
where  $H\cong E_1$ or $H\cong Q_8$ and  $E$ is an elementary
abelian $p$-group.

\end{thm}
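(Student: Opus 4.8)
The plan is to establish the inequality first and then analyze the equality case. For the bound, I would set $Z = G'$ (so $|Z| = p$) and apply Proposition~\ref{es} to the central extension $1 \to G' \to G \to G^{ab} \to 1$. This yields an epimorphism onto $G \otimes G$ and hence an estimate $|G \otimes G| \le |G' \otimes G^{ab}||G^{ab}\otimes G^{ab}|$, reducing the whole problem to counting tensor squares of abelian groups, which are computable explicitly. The key observation is that the worst case (largest order) occurs precisely when $G^{ab}$ is as "spread out" as possible toward being elementary abelian, since for a fixed order $p^{n-1}$ the tensor square of an elementary abelian group dominates that of any other abelian group of the same order. Proposition~\ref{pro} is designed exactly for this: it shows that unless both $G^{ab}$ and $Z(G)$ are elementary abelian, the order is strictly below $p^{(n-1)^2+2}$.

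**Next**, having reduced by Proposition~\ref{pro} to the case where both $G^{ab}$ and $Z(G)$ are elementary abelian, I would write $G = H Z(G)$ with $H$ extra-special (as in the proof of Proposition~\ref{pro}(ii)), so $H \cong E_1$ or $H \cong Q_8$ in the order-$p^3$ extremal case, and $Z(G)$ is elementary abelian. The structural claim is that $G$ must in fact be a direct product $H \times E$ with $E$ elementary abelian. Here I would invoke the classification of groups with derived subgroup of order $p$: such a $G$ decomposes as a central product of an extra-special group with an abelian group, and when everything in sight is elementary abelian the central product collapses to a genuine direct product $H \times E$. For this decomposition I would use Lemma~\ref{2} and Proposition~\ref{3} to pin down $|H \otimes H|$ exactly ($p^6$ for $E_1$, and the explicit structure $C_2^{(2)} \times C_4^{(2)}$ for $Q_8$).

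**To verify the equality**, I would compute $|G \otimes G|$ for $G = H \times E$ directly. Using the formula $|G\otimes G| = |\nabla(G)||\mathcal{M}(G)||G'|$ together with the known decomposition of tensor squares of direct products (from \cite[Proposition 11]{br}), one gets $|(H\times E)\otimes(H\times E)| = |H\otimes H|\,|H\otimes E|^2\,|E\otimes E|$, and since $H \otimes H$, $H \otimes E$, and $E \otimes E$ are all elementary abelian of computable rank, the exponent sums should total exactly $(n-1)^2 + 2$. Writing $|E| = p^{n-3}$ and $|H| = p^3$, I expect the count $6 + 2\cdot 2(n-3) + (n-3)^2 = (n-1)^2 + 2$ to check out, confirming the bound is attained.

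**The main obstacle** will be the converse direction of the equality case: showing that attaining the bound \emph{forces} $G \cong H \times E$ rather than merely some central product. The inequalities in Proposition~\ref{pro} are strict, which cleanly rules out non-elementary-abelian $G^{ab}$ or $Z(G)$, but promoting the central product $HZ(G)$ to a direct product, and ruling out the possibility that $H$ is a larger extra-special group (which Lemma~\ref{de} shows gives tensor square of order $p^{4m^2}$, strictly smaller than the target once $m \ge 2$), requires a careful term-by-term comparison showing any deviation strictly decreases the order. The delicate point is tracking how the image of the map $l$ in Proposition~\ref{es} behaves: equality requires $\mathrm{Im}\,l$ to be as small as possible, and translating this minimality back into the rigid direct-product structure is where the real work lies.
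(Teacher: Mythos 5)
Your proposal is correct and follows essentially the same route as the paper: reduce via Proposition~\ref{pro} to the case where $G^{ab}$ and $Z(G)$ are elementary abelian, decompose $G\cong H\times E$ with $H$ extra-special, eliminate $|H|>p^3$ via Lemma~\ref{de} and $H\cong D_8, E_2$ via Proposition~\ref{3} and Lemma~\ref{2}, and confirm the exponent count $6+4(n-3)+(n-3)^2=(n-1)^2+2$. The ``main obstacle'' you flag is actually immediate: once $Z(G)$ is elementary abelian, $G'$ (which lies in $Z(G)$ since $|G'|=p$ and $G'\cap Z(G)\neq 1$) has a complement $E$ in $Z(G)$, so the central product collapses to the genuine direct product $H\times E$ without any analysis of $\mathrm{Im}\,l$ --- exactly as the paper dispatches it in one line.
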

\begin{proof}
  One can assume that $G^{ab}$ and $Z(G)$ are elementary abelian and $|Z(G)|\geq p^2$ by Proposition \ref{2}.
 Let $E$ be the complement of $G^{'}$ in $Z(G)$. Thus there exists an extra-special $p$-group $H$ of order $p^{2m+1}$ such that $G\cong H\times E$.

In the case $m\geq 2$, it is easily seen that $|G\otimes G|<
p^{(n-1)^2+2}$. For $m=1$,
\[|G\otimes G|=|H\otimes H||E\otimes E||E\otimes H|^2\]  where
 $|E\otimes E||E\otimes H|^2=p^{(n-1)(n-3)}$.

 Now Proposition \ref{3} and Lemma \ref{2} imply that
$|G\otimes G|=p^{(n-1)^2+2}$ when $H\cong Q_8$ or $H$ has exponent
$p$.
\end{proof}
\begin{thm} Let $G$ be a non abelian $p$-group of order $p^n$. If $|G^{'}|=p^m$, then
\[|G\otimes G|\leq p^{(n-1)(n-m)+2}.\]
\end{thm}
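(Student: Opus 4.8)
The plan is to argue by induction on $m$, where $|G'|=p^m$. The base case $m=1$ is exactly Theorem \ref{mt}, which gives $|G\otimes G|\le p^{(n-1)^2+2}=p^{(n-1)(n-1)+2}$, matching the claimed bound at $m=1$. So I would assume $m\ge 2$ and that the statement holds (for every admissible order) for non-abelian $p$-groups whose derived subgroup has order $p^{m-1}$.

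For the inductive step, since $G$ is a nontrivial finite $p$-group and $G'$ is a nontrivial normal subgroup, $G'\cap Z(G)\ne 1$, so I may pick a central subgroup $Z\le G'\cap Z(G)$ with $|Z|=p$. Put $\bar G=G/Z$; then $|\bar G|=p^{n-1}$, and because $Z\le G'$ we have $\bar G'=G'/Z$ of order $p^{m-1}$, so $\bar G$ is still non-abelian. Applying Proposition \ref{es} to the central extension $1\to Z\to G\to \bar G\to 1$ gives the exact sequence $(Z\otimes G)\times(G\otimes Z)\stackrel{l}\longrightarrow G\otimes G\longrightarrow \bar G\otimes \bar G\longrightarrow 1$, whence
\[|G\otimes G|=|\mathrm{Im}\,l|\cdot|\bar G\otimes \bar G|.\]
The inductive hypothesis applied to $\bar G$ (with parameters $n-1$ and $m-1$) yields $|\bar G\otimes \bar G|\le p^{((n-1)-1)((n-1)-(m-1))+2}=p^{(n-2)(n-m)+2}$.

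It then remains to show $|\mathrm{Im}\,l|\le p^{\,n-m}$. Because $Z$ is central, \cite[Proposition 11]{br} lets me replace $G$ by $G^{ab}$ in each factor, so $Z\otimes G\cong Z\otimes G^{ab}$ and $G\otimes Z\cong G^{ab}\otimes Z$; moreover, for a central $Z$ the images of these two factors inside $G\otimes G$ coincide, so that $|\mathrm{Im}\,l|\le|Z\otimes G^{ab}|$. Writing $G^{ab}\cong C_{p^{a_1}}\times\cdots\times C_{p^{a_d}}$, where $d$ is the minimal number of generators of $G^{ab}$, we get $Z\otimes G^{ab}\cong C_p^{(d)}$, and $d\le n-m$ since $|G^{ab}|=p^{\,n-m}$. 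Hence $|\mathrm{Im}\,l|\le p^{d}\le p^{\,n-m}$, and combining the displayed estimates gives $|G\otimes G|\le p^{\,n-m}\cdot p^{(n-2)(n-m)+2}=p^{(n-1)(n-m)+2}$, as required. This is exactly the mechanism already used in the proof of Proposition \ref{pro}(i), where $Z=G'$.

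The main obstacle is precisely the single-factor estimate $|\mathrm{Im}\,l|\le|Z\otimes G^{ab}|$. The naive bound $|\mathrm{Im}\,l|\le|Z\otimes G|\cdot|G\otimes Z|=p^{2d}$ is too weak: taking $d=n-m$ it only produces $p^{n(n-m)+2}$, which is even worse than Rocco's bound $p^{n(n-m)}$. The entire gain comes from the fact that, for central $Z$, the element $z\otimes g$ and $g\otimes z$ determine one another, so the central subgroup $\mathrm{Im}\,l$ is governed by a single copy of $Z\otimes G^{ab}$ rather than by the product of two copies; making this coincidence of images rigorous in full generality is the crux of the argument, after which the induction closes by routine exponent bookkeeping.
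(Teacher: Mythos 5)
Your argument is correct and is essentially the paper's own proof: induction on $m$ with Theorem \ref{mt} as the base case, a central subgroup of order $p$ inside $G'$, the exact sequence of Proposition \ref{es}, the single-factor estimate $|\mathrm{Im}\,l|\le|Z\otimes G^{ab}|\le p^{\,n-m}$, and the inductive hypothesis applied to $G/Z$ (the paper writes exactly the inequality $|G\otimes G|\le|K\otimes G^{ab}|\,|G/K\otimes G/K|$ and leaves unjustified the very crux you flag). One correction to your justification of that crux: centrality of $Z$ alone does not force the images of $Z\otimes G$ and $G\otimes Z$ to coincide --- for $G=C_2\times C_2$ and $Z$ the diagonal subgroup the two images together generate a subgroup of order $8>|Z\otimes G^{ab}|=4$ --- rather, what is needed is the containment $Z\le G'$, since for $z=\kappa(t)$ the standard identities $\kappa(t)\otimes g=t\,({}^{g}t)^{-1}$ and $g\otimes\kappa(t)=({}^{g}t)\,t^{-1}$ give $g\otimes z=(z\otimes g)^{-1}$, and your chosen $Z\le G'\cap Z(G)$ does satisfy this, so the argument closes.
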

\begin{proof}
We prove theorem by induction  on $m$. For $m=1$ the result is
obtained by Theorem \ref{mt}.

Let $m\geq 2$ and $K$ be a central subgroup of order $p$ contained
in $G^{'}$. Induction hypothesis and Proposition \ref{3} yield
\[\begin{array}{lcl}|G\otimes G|&\leq&|K\otimes G^{ab}||G/K\otimes
G/K|\vspace{.3cm}\\&\leq&
p^{n-m}p^{(n-m)(n-2)+2}=p^{(n-1)(n-m)+2}.\end{array}\]
 \end{proof}
 \begin{cor}
 Let $G$ be a non abelian $p$-group of order $p^n$. If $|G^{'}|=p^m$, then
\[|\pi_3SK(G, 1)| \leq p^{n(n-m-1)+2}.\]
In particular when $m=1$, then
\[|\pi_3SK(G, 1)|\leq p^{n(n-2)+2},\] and the equality holds if and only if $G$ is isomorphic to $H\times E$,
in which  $H$ is extra-special of order $p^3$ of exponent $p$  or
$H\cong Q_8$ and  $E$ is an elementary abelian $p$-group.
 \end{cor}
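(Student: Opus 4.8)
The plan is to derive the corollary directly from the theorem that immediately precedes it, using the topological identification $\pi_3SK(G,1)\cong J_2(G)=\mathrm{Ker}(\kappa)$ announced in the introduction. Since $\kappa:G\otimes G\to G'$ is a surjection onto the derived subgroup, its kernel $J_2(G)$ satisfies $|G\otimes G|=|J_2(G)|\,|G'|=|J_2(G)|\,p^m$. Combining this with the bound $|G\otimes G|\leq p^{(n-1)(n-m)+2}$ just proved gives
\[
|\pi_3SK(G,1)|=|J_2(G)|=\frac{|G\otimes G|}{p^m}\leq p^{(n-1)(n-m)+2-m}.
\]
The exponent simplifies as $(n-1)(n-m)+2-m=n^2-nm-n+m+2-m=n(n-m-1)+2$, which is exactly the claimed general bound.

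For the special case $m=1$, I would substitute $m=1$ into the general inequality to obtain $|\pi_3SK(G,1)|\leq p^{n(n-2)+2}$. The equality characterization then follows by tracking when equality holds in the preceding estimate: since $|\pi_3SK(G,1)|=|G\otimes G|/p$ when $m=1$, equality in the $\pi_3$ bound is equivalent to equality in $|G\otimes G|\leq p^{(n-1)^2+2}$, and the latter is precisely the situation classified in Theorem \ref{mt}. Hence the extremal groups are exactly $G\cong H\times E$ with $H\cong E_1$ (extra-special of order $p^3$ and exponent $p$) or $H\cong Q_8$, and $E$ elementary abelian, which matches the stated conclusion.

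The main point requiring care is the reduction $|J_2(G)|=|G\otimes G|/|G'|$, which relies on $\kappa$ being surjective onto $G'$; this is immediate from the defining assignment $g\otimes h\mapsto [g,h]$, since commutators generate $G'$. I do not anticipate a serious obstacle here, as the corollary is essentially an arithmetic consequence of the theorem combined with the exact value $|G'|=p^m$; the only genuine content beyond the theorem is the clean simplification of the exponent and the transfer of the equality case from Theorem \ref{mt}.
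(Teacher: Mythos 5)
Your proposal is correct and matches the argument the paper intends: the corollary is left without an explicit proof, but the introduction indicates precisely this derivation, namely $|\pi_3SK(G,1)|=|J_2(G)|=|G\otimes G|/|G'|$ (using surjectivity of $\kappa$ onto $G'$) combined with the preceding theorem, and the equality case transferred from Theorem \ref{mt}. Your exponent simplification and the handling of the $m=1$ case are both accurate.
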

\begin{cor} If the order of tensor square of $G$ is equal to $p^{(n-1)^2+2}$, then
\[G\otimes G\cong {C_p}^{((n-1)^2+2)}~(p\neq 2)~\text{or}~ G\otimes G\cong C_4^{(2)}\times C_2^{((n-1)^2-2)}.\]
\end{cor}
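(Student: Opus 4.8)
The plan is to combine the equality case of Theorem~\ref{mt} with the already known tensor squares of the building blocks, and then simply read off the abelian-group structure of $G\otimes G$. First I would observe that the hypothesis $|G\otimes G|=p^{(n-1)^2+2}$ forces $|G'|=p$: for a non abelian $p$-group with $|G'|=p^m$ and $m\geq2$ the preceding theorem gives $|G\otimes G|\leq p^{(n-1)(n-m)+2}<p^{(n-1)^2+2}$, since $n-m<n-1$. Hence only $m=1$ is compatible with the assumed order, so Theorem~\ref{mt} applies and yields $G\cong H\times E$, where $E$ is elementary abelian and $H\cong E_1$ (when $p$ is odd) or $H\cong Q_8$ (when $p=2$). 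The two cases are genuinely disjoint, because $E_1$ has no analogue for $p=2$ while $Q_8$ is a $2$-group.

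Next I would decompose the tensor square of the direct product. Using the splitting of \cite[Proposition 11]{br} already invoked in the proof of Theorem~\ref{mt},
\[G\otimes G\cong (H\otimes H)\times(E\otimes E)\times(H\otimes E)\times(E\otimes H),\]
where, since $E$ is central and the factors act trivially on one another, the two cross terms each collapse to the ordinary abelian tensor product $H^{ab}\otimes E$. Because $E$, $E_1^{ab}$ and $Q_8^{ab}$ are all elementary abelian, each of $E\otimes E$ and $H^{ab}\otimes E$ is elementary abelian, so the only factor capable of contributing elements of order larger than $p$ is $H\otimes H$.

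It then remains to substitute the known values of $H\otimes H$. For $p$ odd, Lemma~\ref{2} gives $E_1\otimes E_1\cong C_p^{(6)}$, so every factor above is elementary abelian and therefore $G\otimes G\cong C_p^{((n-1)^2+2)}$, the exponent being forced by the order computed in Theorem~\ref{mt}. For $p=2$, Proposition~\ref{3} gives $Q_8\otimes Q_8\cong C_2^{(2)}\times C_4^{(2)}$, so the factors of order $4$ of $G\otimes G$ arise solely from this term and there are exactly two of them; comparing orders, namely $|G\otimes G|=2^{(n-1)^2+2}$ against $|C_4^{(2)}|=2^4$, forces the remaining part to be elementary abelian of rank $(n-1)^2-2$, whence $G\otimes G\cong C_4^{(2)}\times C_2^{((n-1)^2-2)}$.

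The main obstacle I expect is justifying that the direct-product decomposition holds \emph{as abelian groups} rather than merely at the level of orders, together with the reduction of the cross terms $H\otimes E$ and $E\otimes H$ to the abelian tensor product $H^{ab}\otimes E$. Once that structural splitting is secured, the remainder is routine bookkeeping with the orders and exponents already established, and the two displayed isomorphism types follow immediately.
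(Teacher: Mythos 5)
Your proposal is correct and follows exactly the route the paper intends: reduce to $|G'|=p$ via the general bound, invoke the equality case of Theorem~\ref{mt} to get $G\cong H\times E$, split $G\otimes G$ by \cite[Proposition 11]{br} as in the proof of that theorem, and substitute $E_1\otimes E_1\cong C_p^{(6)}$ and $Q_8\otimes Q_8\cong C_2^{(2)}\times C_4^{(2)}$ from Lemma~\ref{2} and Proposition~\ref{3}. The paper leaves this corollary's proof implicit, and your argument supplies precisely the missing bookkeeping.
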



\begin{thebibliography}{20}
\bibitem{ru} R.D. Blyth, F. Fumagalli andM. Morigi. Some structural results on the non-abelian tensor square of
groups. \textit{J. Group Theory.} \textbf{13}(1) (2010), 83--94.
\bibitem{br} R. Brown, D.L. Johnson and E.F. Robertson. Some Computations of non Abelian tensor products of
groups. \textit{J. Algebra.}  \textbf{111} (1987), 177-202.
\bibitem{br2}  R. Brown and J.-L Loday. Van Kampen theorems for diagrams of spaces. With an appendix by M. Zisman, \textit{Topology}
 \textbf{26} (1987), 311-335.
\bibitem{el} G. Ellis. On the tensor square of a prime power group. \textit{Arch. Math.} \textbf{66} (1996), 467-469.
\bibitem{ka} G. Karpilovsky. \textit{The Schur multiplier} $($London Math. Soc. Monogr. $($N.S.$)$ 2 1987$)$.
\bibitem{ni} P. Niroomand, M. R. R. Moghaddam.  Some properties on the specific subgroup of tensor square, to appear in \textit{Comm. Algebra.}
\bibitem{roc} N. R. Rocco. On a construction related to the nonabelian tensor square of a group. \textit{Bol. Soc.
Brasil. Mat.} \textbf{22}(1) (1991), 63-79.
\bibitem{ro}  N. R. Rocco. A presentation for crossed embeding of finite solvable
groups. \textit{Comm. Algebra.}  \textbf{22}(6) (1994), 1975-1998.

\end{thebibliography}
\end{document}